\newtheorem{lem}{Lemma}
\newtheorem{stw}{Proposition}
\newtheorem{tw}{Theorem}
\newtheorem{wn}{Corollary}
\author{Dariusz Socha}
\email{d.socha@mini.pw.edu.pl}
\affiliation[PW]
{Faculty of Mathematics and Information Sciences,\\
Warsaw University of Technology}
\title[\texttt{achemso} demonstration]
{Existence of the global solutions of an integro-differential  equation in population dynamics}
\begin{document}

\begin{abstract}
We study a nonlinear integro-differential  equation arising in population
dynamics. It has been already proved  by Rybka, Tang and  Waxman
that it has a unique local in time
solution. Here, after deriving appropriate a priori estimates  we
show that the dynamics is global in time. 
\end{abstract}

\section{Introduction}

We study solutions to  an integro-differential
equation (\ref{eq:rownanie}), arising in population dynamics, see \cite{rtw}. 
The specific problem, we have in mind is set up to model the evolution
of an asexual population. Let us describe the basic premise.
The organisms are  born mature and
mutation occurs at the birth. The  selection at time $t>0$ depends
only on a
single  trait  $x$, which is 
assumed to be a real number. 
We study
evolution of a probability density $\phi(x,t)$ of the trait in the population,
taking value  
$x$ at time $t>0$.
This line of theoretical research in biology was initiated by
Kimura \cite{mkimura}. The actual set of equation, we study here, was derived
in \cite{Wax-Peck1} under the premise that the environment changes at a
constant rate $c$. The model equation is as follows,
\begin{equation}\label{eq:rownanie}
\left\{
\begin{array}{l}
\displaystyle{
\frac{\partial\phi}{\partial t}=((1-U)\bar{d}(t)-d(x-ct))\phi+U\bar{d}(t)f\star\phi},\\
\displaystyle{\phi(x,0)=\phi_{0}(x)},\\
\phi(x,t)\geq 0\;\;\mbox{for all}\;\;(x,t)\in(-\infty,\infty)\times(0,\infty),\\
\displaystyle{
\int_{-\infty}^{\infty}\phi(x,t)\,dx=1\;\;\mbox{for all}\;\;t\geq 0 .}
\end{array}\right. 
\end{equation}
Here $U$ is a positive coefficient, which is less than one, $c$ is a
given positive number. The other
ingredients  will be explained below.
This initial value problem has been studied in a series of papers,
\cite{Wax-Peck2}, \cite{fala}, \cite{rtw}, \cite{pakistanczyk}.
In \cite{Wax-Peck2} a discrete time version was studied
and  numerical experiments were performed suggesting 
existence of a traveling wave. This is  a special but very
important form of a solution.
Such a
solution tells us that the organism may follow the changes of the
environment. It also gave insight into
the shape of such moving distribution density. In \cite{broom} the
multi-loci version of (\ref{eq:rownanie}) for
a traveling wave was numerically studied.  Later, 
Becker in her thesis \cite{becker} considered a model of population with
a two coupled traits. 
Since this kind of nonlinear nonlocal
equation seems to be new, there has been  a lot of open theoretical 
questions. In \cite{fala} the authors rigorously established existence of a
traveling wave. These results were subsequently improved by
\cite{pakistanczyk}.  
Well-posedness of system (\ref{eq:rownanie}) was studied in \cite{rtw}, where unique local-in-time solutions were constructed and their
continuous dependence upon the data was shown too. Also the
well-posedness of the system linearized around the traveling wave was
studied. However, two major questions: (1)  global in time existence
of solutions; (2) stability of the traveling wave, were left open. 

In \cite{rtw} the authors missed a crucial a priori estimate for the global
existence. We derive it here, see Proposition 1 and Theorem 1. 
However, we impose
an additional assumption on the initial datum, namely we require that
it has finite first four moments. Our main
accomplishment, which is the global existence stated in Theorem 2 is
based on that estimate. We stress that this fact is not obvious,
since we deal with a nonlinear problem whose growth in
$\phi$ is quadratic.

Let us describe  (\ref{eq:rownanie}) and its
components in details. The equation 
is so normalized that  $x=0$ is the optimal genotypic value. The
mortality rate given by 

$$
d(x) = 1+x^2,
$$
reflects this normalization. Since by assumption the optimal value
changes with the velocity $c$, the mean mortality obtained by
averaging $d$ against $\phi$ is,
\begin{equation}
\bar{d}(t) =\int_{\mathds{R}}d(x-ct)\phi(x,t)dx.
\end{equation}
This suggests that $\phi$ should have at least first two moments. For
any probability distribution $u$ its $n$-th moment is denoted by
$M_n(u)$, 
\begin{equation}\label{momenta}
M_n(u) =\int_{\mathds{R}}x^{n}u(x)\,dx.
\end{equation}

In our equation,  (\ref{eq:rownanie}) $f$ is the normal distribution
with mean $m$ and 
standard deviation $\sigma$, i.e
$$
f(x) =
\frac{1}{\sqrt{2\pi}\sigma}e^{-\frac{(x-m)^{2}}{2{\sigma}^{2}}}. 
$$ 

This choice is biologically
well-motivated, but our analysis holds for any smooth probability
density  with finite sufficiently high moments.

We recall that $f\star\phi$ denotes the convolution of $f$ and $\phi$, i.e.

$$
f\star \phi(x,t)=\int_{\mathds{R}}f(x-y)\phi(y,t)\,dy.
$$

After describing all the ingredients of  (\ref{eq:rownanie}) we
present the plan of our paper. We observe that we have to show enough a
priori estimates to prove global in time existence. We mentioned that
the density $\phi$ must have first two moments. 
In order to guarantee that we found that it is advantageous to work with the
following Banach space,
\[X = \{u \in C^{0}(\mathds{R}):\sum_{j=0}^{2}p_{2j}(u)<\infty \},
\hskip 1em \mbox{where}\;
p_j(u)=||x^{j}u(x)||_{L^{\infty}(\mathds{R})}.\]
In \cite{rtw} it was shown, that for a positive $T>0$ equation
(\ref{eq:rownanie}) system has a 
unique solution $\phi \in C^{1}([0,T),X)$.

It turns out however that, an additional assumption of finiteness of
the fourth moment of initial data guarantees that the solution is
global in time. This is the content of Theorem 2.
The first step in the proof is an observation that the second moment
of the solution defined on interval $(0,T)$ is time integrable,
provided that $T$ is finite. This in turn implies that the fourth
moment is uniformly bounded over $(0,T)$. As a result, we may deduce
that the solution $\phi(\cdot,t)$ is not only bounded in $X$, but also
it forms a Cauchy sequence if $t_k\to T$. This analysis depends in an
essential way on the  constant variation
formula for (\ref{eq:rownanie}).
\section{Estimate of moments}

Experience gained in \cite{rtw} suggests that it is easier to work with
the integral form of 
(\ref{eq:rownanie}). 
The constant variation formula applied to
(\ref{eq:rownanie}) yields,
\begin{equation}\label{g:calkowe}
\begin{split}
\phi(x,t)=&\exp(-\int_{0}^{t}[d(x-cs)-(1-U)\bar{d}(s)]\,ds)\phi_{0}(x)\\
&+\int_{0}^{t}\exp(-\int_{s}^{t}
[d(x-c\tau)-(1-U)\bar{d}(\tau)]d\tau)U\bar{d}(s)f\star\phi(x,s) \,ds. 
\end{split}
\end{equation}
Equation (\ref{g:calkowe}) is indeed nonlinear, for $\bar{d}$ depends
upon $\phi$.
However, for a fixed $\bar{d}$ (\ref{g:calkowe}) becomes
a linear equation for $\phi$.  We will use this property to squeeze out the
first estimate below. 

\begin{stw}
If $\phi$ is  a unique solution to (\ref{eq:rownanie}), defined on
$[0,T)$, then function 
$\bar{d}(s)$ belongs to $L^{1}(0,T)$. 
\end{stw}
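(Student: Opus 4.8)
The plan is to avoid any differential inequality for $\bar{d}$ and instead extract the bound directly from the mass-conservation constraint $\int_{\mathds{R}}\phi(x,t)\,dx=1$, read off the Duhamel formula (\ref{g:calkowe}). Writing $I(t)=\int_0^t\bar{d}(s)\,ds$, the goal $\bar{d}\in L^{1}(0,T)$ is exactly the statement $I(T^-)<\infty$, so it suffices to produce an a priori upper bound for $I(t)$ that stays finite as $t\to T^-$.

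The key observation is that the first summand in (\ref{g:calkowe}) factors as
\[
\exp\Big(-\int_0^t d(x-cs)\,ds+(1-U)I(t)\Big)\phi_0(x)=e^{(1-U)I(t)}\,e^{-\int_0^t d(x-cs)\,ds}\phi_0(x),
\]
because the term $(1-U)\bar{d}(s)$ in the exponent does not depend on $x$ and pulls out of the spatial integral. The second summand in (\ref{g:calkowe}) is a product of nonnegative factors: $U>0$, $\bar{d}(s)\ge 0$ since $d\ge 0$ and $\phi\ge 0$, the convolution $f\star\phi\ge 0$, and the exponential kernel is positive. Integrating (\ref{g:calkowe}) in $x$ and discarding the nonnegative second term therefore yields, with $L(t):=\int_{\mathds{R}}e^{-\int_0^t d(x-cs)\,ds}\phi_0(x)\,dx>0$, the inequality $e^{(1-U)I(t)}L(t)\le 1$.

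From here the conclusion is immediate: since $1-U>0$ we get $I(t)\le -\frac{1}{1-U}\ln L(t)$. Because the integrand defining $L$ is decreasing in $t$ (the exponent $\int_0^t d(x-cs)\,ds$ increases), $L$ is nonincreasing, so for every $t<T$ one has $L(t)\ge L(T):=\int_{\mathds{R}}e^{-\int_0^T d(x-cs)\,ds}\phi_0(x)\,dx$, which is strictly positive for finite $T$ since $\phi_0$ is a nontrivial probability density. Hence $I(t)\le -\frac{1}{1-U}\ln L(T)<\infty$ uniformly in $t<T$, and letting $t\to T^-$ gives $\int_0^T\bar{d}\,ds<\infty$. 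The only genuine obstacle here is conceptual rather than technical: one must resist trying to close a differential inequality for $\bar{d}$, whose quadratic nonlinearity and the convolution shift by $m$ make the moment hierarchy fail to close, and instead exploit that the normalization $\int\phi=1$ together with positivity already bounds the growth factor $e^{(1-U)I(t)}$. The remaining points to verify are only that $L$ is finite, positive and continuous, which follow from dominated convergence and $\phi_0\in L^{1}(\mathds{R})$.
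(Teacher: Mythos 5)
Your proof is correct and follows essentially the same route as the paper: integrate the Duhamel formula (\ref{g:calkowe}) in $x$, use positivity to discard the second term, pull the $x$-independent factor $\exp\big((1-U)\int_0^t\bar{d}(s)\,ds\big)$ out of the integral, and bound the remaining quantity $L(t)=\int_{\mathds{R}}e^{-\int_0^t d(x-cs)\,ds}\phi_0(x)\,dx$ from below. The only real difference is in that last step: you argue abstractly (monotonicity of $L$ plus strict positivity of $L(T)$ for a nontrivial probability density $\phi_0$), whereas the paper computes the exponent explicitly and picks $R$ with $\int_{|x|<R}\phi_0=\frac12$, which produces the concrete constant $C(T,\phi_0)$ in (\ref{trzyipol}) that is then reused in the later estimates; your version suffices for the proposition itself but would need that quantitative form restored for the sequel.
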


\begin{proof}
From now on, we will work with the integral  equation (\ref{g:calkowe}).
Integrating both sides of (\ref{g:calkowe}) with respect to $x$ over $\mathds{R}$, we get
\begin{eqnarray*}
1&=&\int_{\mathds{R}}\phi(x,t)\,dx\\
&=&\int_{\mathds{R}}\Big[\exp\Big(-\int_{0}^{t}[d(x-cs)-(1-U)\bar{d}(s)]\,ds\Big)\phi_{0}(x)\\
&&+\int_{0}^{t}\exp\Big(-\int_{s}^{t}[d(x-c\tau)-(1-U)\bar{d}(\tau)]d\tau\Big)U\bar{d}(s)f\star\phi(x,s)
  \,ds\Big]\,dx.
\end{eqnarray*}
Since  both terms on the right-hand-side (RHS) are non-negative, we
conclude that 

\begin{equation}\label{zdolu}
1\geqslant \exp\Big((1-U)\int_{0}^{t}\bar{d}(s)\,ds\Big)\int_{\mathds{R}}\exp\Big(-\int_{0}^{t}d(x-cs)\,ds\Big)\phi_{0}(x)\,dx.
\end{equation}
In order to extract the desired estimate, we examine the following
expression in detail, 

\[\int_{\mathds{R}}\exp\Big(-\int_{0}^{t}d(x-cs)\,ds\Big)\phi_{0}(x)\,dx=
e^{-t-c^{2}\frac{t^{3}}{3}}\int_{\mathds{R}}e^{-x^{2}t+cxt^2}\phi_{0}(x)\,dx =
e^{-t-c^{2}\frac{t^{3}}{3}}J
.\]
We choose $R=R_{\phi_{0}}$ such that

\begin{equation}\label{jednadruga}
\int_{|x|<R}\phi_{0}(x)=\frac{1}{2}.
\end{equation}
If we take into account that
\[\inf_{\{x\in\mathds{R}:|x|<R\}}\exp\big(-x^{2}t+cxt^2\big)=\exp\big(-R^{2}t-cRt^2\big),\]
then we have the following bound on $J$
\[J\geqslant\int_{|x|<R}e^{-x^{2}t+cxt^2}\phi_{0}(x)\,dx\geqslant e^{-R^{2}t-cRt^2}\int_{|x|<R}\phi_{0}(x)\,dx=\frac{1}{2}e^{-R^{2}t-cRt^2}.\]

The last inequality holds because of (\ref{jednadruga}).
Keeping this is mind, we can deduce that (\ref{zdolu}) implies the
following inequality,
\begin{equation}\label{trzyipol}
C(t,\phi_{0}):=2\exp\big(t+c^{2}\frac{t^{3}}{3}\big)\exp\big(R^{2}t+cRt^2\big)\geqslant
\exp\bigg((1-U)\int_{0}^{t}\bar{d}(s)\,ds\bigg). 
\end{equation}


Since $t \to C(t,\phi_{0})$ is increasing with respect to $t$, $t\leqslant T<\infty$ and $\bar{d}$ is positive we infer that

\[\int_{0}^{T}\bar{d}(s)\,ds\le  \frac{\ln C(T,\phi_{0})}{1-U} <\infty.\]
\end{proof}
Subsequently, we will use simple estimates of  probability density moments.
\begin{lem}\label{momenty0}
For any probability measure $f$, the following inequalities hold 
\[|M_{n}(f)|\leqslant 1+M_{n+2}(f)\;\;\mbox{when  $n$ is even,}\]
\[|M_{n}(f)|\leqslant 1+M_{n+1}(f)\;\;\mbox{when $n$ is odd.}\]
Here, $M_{n}$ is the n-th moment defined by (\ref{momenta}).
\end{lem}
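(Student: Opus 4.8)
The plan is to prove this purely by splitting the real line according to the sign and magnitude of $x$. The statement is entirely elementary: it compares a lower-order moment $M_n$ against a moment two (or one) degrees higher, using only the normalization $\int_{\mathds{R}} f(x)\,dx = 1$. The key observation is that on the region $|x| \leqslant 1$, one has $|x|^n \leqslant 1$, while on the region $|x| > 1$, lower powers of $|x|$ are dominated by higher powers. So I would split each integral $|M_n(f)| \leqslant \int_{\mathds{R}} |x|^n f(x)\,dx$ into these two regions and bound each piece separately.

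More precisely, for the even case I would write
\[
|M_n(f)| \leqslant \int_{\mathds{R}} |x|^n f(x)\,dx = \int_{|x|\leqslant 1} |x|^n f(x)\,dx + \int_{|x|>1} |x|^n f(x)\,dx.
\]
On the first region $|x|^n \leqslant 1$, so that integral is at most $\int_{|x|\leqslant 1} f(x)\,dx \leqslant 1$. On the second region $|x| > 1$ forces $|x|^n \leqslant |x|^{n+2} = x^{n+2}$ (the last equality because $n+2$ is even), so that integral is at most $\int_{|x|>1} x^{n+2} f(x)\,dx \leqslant M_{n+2}(f)$, the final inequality holding because the integrand $x^{n+2} f(x)$ is nonnegative and we are merely enlarging the domain of integration back to all of $\mathds{R}$. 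Summing the two bounds gives $|M_n(f)| \leqslant 1 + M_{n+2}(f)$.

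For the odd case the argument is identical except that on $|x|>1$ I would use $|x|^n \leqslant |x|^{n+1} = x^{n+1}$, since $n+1$ is even guarantees $|x|^{n+1} = x^{n+1} \geqslant 0$ and hence the tail integral is bounded by $M_{n+1}(f)$. The same region $|x|\leqslant 1$ contributes at most $1$. This yields $|M_n(f)| \leqslant 1 + M_{n+1}(f)$.

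There is really no hard step here; the only point requiring a little care is matching the parity of the dominating exponent so that the resulting higher moment is genuinely $M_{n+2}$ or $M_{n+1}$ of the measure $f$ (i.e. so that $x^{n+2}$ or $x^{n+1}$ equals $|x|^{n+2}$ or $|x|^{n+1}$ and is thus nonnegative, making the enlargement of the integration domain legitimate). Apart from that, the proof is a two-line domain decomposition applied twice. I would present it compactly, handling both parity cases in parallel.
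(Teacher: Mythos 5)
Your proof is correct. The paper offers no proof of this lemma at all (it is explicitly ``left to the reader''), and your domain decomposition into $|x|\leqslant 1$ and $|x|>1$ --- equivalently, integrating the pointwise bound $|x|^n \leqslant 1 + |x|^{n+2}$ (resp.\ $1+|x|^{n+1}$) against $f$ --- is exactly the easy argument the authors had in mind, with the parity bookkeeping (ensuring the dominating power is even, hence equal to $x^{n+2}$ or $x^{n+1}$ and nonnegative) handled correctly.
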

We leave an easy proof to the reader.

\bigskip\noindent
In particular, this lemma implies the following fact.
\begin{wn}\label{wniosek1}
If $f$ is any probability density, then for any $1\leqslant i \leqslant 4$, we have
\[|M_{i}(f)| \leqslant 2+M_{4}(f).\]
\end{wn}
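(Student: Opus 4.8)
The plan is to treat the four values of $i$ separately and, in each case, reduce the desired bound to a single invocation (or a short chain of invocations) of Lemma~\ref{momenty0}. The only structural fact I need beyond the lemma is that even moments are non-negative: since $x^{2k}\geqslant 0$ pointwise, we have $M_2(f)\geqslant 0$ and $M_4(f)\geqslant 0$, so in particular $|M_2(f)|=M_2(f)$ and $|M_4(f)|=M_4(f)$. This lets me drop the absolute value on any even moment whenever convenient.

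For $i=4$ there is nothing to prove, since $|M_4(f)|=M_4(f)\leqslant 2+M_4(f)$. For $i=3$, which is odd, the second inequality of Lemma~\ref{momenty0} gives $|M_3(f)|\leqslant 1+M_4(f)\leqslant 2+M_4(f)$. For $i=2$, which is even, the first inequality gives $|M_2(f)|\leqslant 1+M_4(f)\leqslant 2+M_4(f)$. In each of these three cases a single application of the lemma already produces a bound with constant $1$, and enlarging $1$ to $2$ completes the estimate.

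The only case that genuinely requires iterating the lemma is $i=1$. Here the odd inequality yields $|M_1(f)|\leqslant 1+M_2(f)$, which still involves $M_2$ rather than $M_4$. I would then feed $M_2=|M_2(f)|$ into the even inequality to obtain $M_2(f)\leqslant 1+M_4(f)$, and chaining the two estimates gives $|M_1(f)|\leqslant 1+(1+M_4(f))=2+M_4(f)$. This telescoping of two separate $+1$ contributions is precisely the source of the constant $2$ in the statement, rather than $1$.

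If there is any obstacle at all it is purely bookkeeping: one must climb the ladder of moments only through indices whose parity allows the lemma to raise the exponent all the way to $4$ without overshooting. For $i\in\{2,3,4\}$ a single step lands exactly on $M_4$, whereas for $i=1$ exactly two steps are needed, passing through the even index $2$. Since every intermediate moment that appears is even and hence non-negative, no absolute-value information is lost along the way, and the argument closes cleanly.
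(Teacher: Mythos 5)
Your proof is correct and follows essentially the same route as the paper: a single application of Lemma~\ref{momenty0} handles $i=2,3,4$, while $i=1$ requires the two-step chain $|M_1(f)|\leqslant 1+M_2(f)\leqslant 2+M_4(f)$, which is exactly the paper's argument. Your treatment is merely more explicit, spelling out the trivial cases and the non-negativity of even moments that the paper leaves implicit.
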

\begin{proof}
Indeed, Lemma 1 gives us
\begin{equation}\label{oszac}
|M_{1}(f)|\leqslant 1+M_{2}(f)\leqslant 2 +M_{4}(f),\;\;|M_{3}(f)|\leqslant 1 + M_{4}(f).
\end{equation}
\end{proof}
Now, we shall show propagation of regularity understood as finiteness
of moments. 
\begin{tw}\label{tw-gl}
Let us suppose that the probability distribution $\phi(\cdot,t)$ is
the solution of (\ref{g:calkowe}) and $\phi_{0}$ has finite first four
moments. Then,\\ 
(a) for all $t\in{[0,T]}$ solution $\phi(\cdot,t)$ has finite first
four moments; moreover, 
\[\sup_{t\in[0,T]}M_{n}(\phi(\cdot,t))<\infty.\hskip 2em n=1,2,3,4.\]
(b) \[\sup_{t\in [0,T)}||\phi(\cdot,t)||_{X}<M(T)<\infty.\]
\end{tw}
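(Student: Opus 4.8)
The entire argument runs on the integral identity (\ref{g:calkowe}) and on the $L^{1}$-bound for $\bar{d}$ supplied by Proposition 1. First note that, since $d(y)=1+y^{2}\geq 1$, both mortality exponentials in (\ref{g:calkowe}) are bounded: $\exp(-\int_{s}^{t}d(x-c\tau)\,d\tau)\leq 1$, while $\exp((1-U)\int_{s}^{t}\bar{d})\leq\exp((1-U)\|\bar{d}\|_{L^{1}(0,T)})=:K_{T}<\infty$. I also record, using $(x-cs)^{2}\geq\frac{1}{2}x^{2}-c^{2}s^{2}$ and $\bar{d}(s)=1+\int_{\mathds{R}}(x-cs)^{2}\phi\,dx$, that $M_{2}(\phi(\cdot,s))\leq 2(\bar{d}(s)-1+c^{2}s^{2})$, so the second moment is integrable on $(0,T)$; this is precisely the integrability that will drive the Gr\"onwall estimate below.

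To prove (a) I multiply (\ref{g:calkowe}) by $x^{4}$ and integrate in $x$. Bounding the mortality exponentials by $1$ discards the favorable (mortality-induced) decay and, crucially, avoids ever producing moments of order higher than four. The first term is then at most $K_{T}M_{4}(\phi_{0})$, finite by hypothesis, and in the second I use the convolution identity $M_{4}(f\star\phi)=\sum_{k=0}^{4}\binom{4}{k}M_{k}(f)M_{4-k}(\phi)$ together with Corollary \ref{wniosek1} to write $M_{4}(f\star\phi(\cdot,s))\leq(1+C_{f})M_{4}(\phi(\cdot,s))+2C_{f}$, where $C_{f}$ collects the finite moments of the Gaussian $f$. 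The outcome is a linear Volterra inequality
\[ M_{4}(\phi(\cdot,t))\leq K_{T}M_{4}(\phi_{0})+K_{T}\,U\!\int_{0}^{t}\bar{d}(s)\bigl[(1+C_{f})M_{4}(\phi(\cdot,s))+2C_{f}\bigr]\,ds, \]
whose kernel $\bar{d}$ is integrable; Gr\"onwall's lemma then yields $\sup_{[0,T]}M_{4}(\phi(\cdot,t))<\infty$, and Corollary \ref{wniosek1} upgrades this to uniform bounds on $M_{1},M_{2},M_{3}$.

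For (b) I again feed the bounds $\exp(-\int_{s}^{t}d)\leq 1$ and $K_{T}$ into (\ref{g:calkowe}), now retaining the weights $x^{j}$, $j\in\{0,2,4\}$. Expanding $x^{j}=\sum_{k}\binom{j}{k}(x-y)^{k}y^{j-k}$ inside the convolution and using $|y|^{j-k}\leq 1+y^{4}$ gives the pointwise bound $\|x^{j}(f\star\phi)(\cdot,s)\|_{L^{\infty}}\leq C_{f}'\,(1+M_{4}(\phi(\cdot,s)))$, where $C_{f}'=\sum_{k}\binom{j}{k}\bigl\|\,|z|^{k}f\,\bigr\|_{L^{\infty}}<\infty$. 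Since the initial term is controlled by $K_{T}\,p_{j}(\phi_{0})$ with $\phi_{0}\in X$, integrating against $U\bar{d}(s)\,ds$ and invoking part (a) and $\bar{d}\in L^{1}$ bounds each $p_{j}(\phi(\cdot,t))$ uniformly on $[0,T)$; summing over $j$ yields the asserted bound on $\|\phi(\cdot,t)\|_{X}$.

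The genuine difficulty is conceptual. In differential form the moment equations appear not to close—the quadratic mortality couples $M_{n}$ to $M_{n+2}$ and the term $\bar{d}\,\phi$ is quadratic in the unknown—so a direct estimate seems hopeless. The resolution is that, tested against the nonnegative weight $x^{4}$, the entire mortality contribution has a sign and may simply be discarded (equivalently, its exponential in (\ref{g:calkowe}) is $\leq 1$), which both closes the estimate at the fourth moment and converts the quadratic nonlinearity into the harmless linear factor $\bar{d}(s)$, now tamed in $L^{1}$ by Proposition 1. The one remaining technical point I would treat with care is the \emph{finiteness} of $M_{4}(\phi(\cdot,t))$ for $t\in(0,T)$, so that the Volterra inequality is not vacuous. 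I would secure it by running the same computation against the truncated weight $x^{4}\mathds{1}_{\{|x|<R\}}$, whose integral is automatically finite, checking that all constants are independent of $R$ (here the bound $\phi(\cdot,t)\in X$ controls the convolution tails), and passing to the limit $R\to\infty$ by monotone convergence.
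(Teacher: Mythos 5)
Your argument is correct and is essentially the paper's own proof: both bound the exponentials in (\ref{g:calkowe}) via the estimate behind Proposition 1, close the fourth-moment estimate through a binomial expansion of the convolution together with Corollary \ref{wniosek1}, apply Gr\"onwall with the integrable kernel $\bar{d}$, and then obtain part (b) by pushing the weights $x^{j}$, $j=0,2,4$, through the convolution and invoking the moment bounds from part (a). The only substantive difference is your closing remark on securing the a priori finiteness of $M_{4}(\phi(\cdot,t))$ before Gr\"onwall can be invoked --- a point the paper passes over in silence --- and your truncation-plus-monotone-convergence fix is the right repair, though it closes more cleanly with the weight $\min(x^{4},R^{4})$ (which is subadditive under the binomial splitting, so the truncated inequality is self-contained) than with $x^{4}\mathds{1}_{\{|x|<R\}}$.
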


\begin{proof}
For the sake of simplicity of notation we will write $M_{n}(t)$ in place of $M_{n}(\phi(\cdot,t))$. We notice that equation (\ref{g:calkowe}), positivity of $\bar{d}$ and estimate (\ref{trzyipol}) imply that
\begin{equation}\label{wstawka}
|M_{n}(t)|\leq C(T,\phi_{0})|M_{n}(0)|
+C(T,\phi_{0})\int_{0}^{t}\bar{d}(s) \int_{\mathds{R}}x^n(f\star\phi)(x,s)\,dxds.
\end{equation}

We want to express the integrand of the RHS of (\ref{wstawka}) in terms of $M_{n}(s)$. For this purpose, we have to transform $x^n$ into a more convenient form.
We recall that Newton binomial formula yields
\[x^{n}=(x-y)^{n}+\sum_{k=1}^{n}{n\choose k}(x-y)^{n-k}y^{k}.\]
If we apply this formula to the integral on the RHS of
(\ref{wstawka}), then we shall see it may be estimated by the
following expression 
\begin{eqnarray*}
&&\bigg|\int_{0}^{t}\bar{d}(s)\int_{\mathds{R}}\int_{\mathds{R}}(x-y)^nf(y)\phi(x-y,s)\,dydxds\bigg|\\
&+&\bigg|\sum_{k=1}^{n}{n \choose
  k}\int_{0}^{t}\bar{d}(s)\int_{\mathds{R}}\int_{\mathds{R}}(x-y)^{n-k}y^kf(y)\phi(x-y,s)\,dydxds\bigg|=:Z_{1}+Z_{2}.
\end{eqnarray*}
If we substitute $z=x-y$ and keep in mind the definition of $f$, then
we notice that  
\[Z_{1}+Z_{2}=\int_{0}^{t}\bar{d}(s)M_{n}(s)\,ds+\sum_{k=1}^{n}{n \choose k}\int_{0}^{t}\bar{d}(s)|M_{k}(f)||M_{n-k}(s)|\,ds\]
Now, we invoke Corollary \ref{wniosek1}, and take $n=4$, then for any
$k=1,2,3$ we have
$$|M_{n-k}(s)|\leq 2+M_{4}(s).
$$
If we write $C_{f}$ for $\max_{k\in\{1,...,n\}}|M_{k}(f)|$, then
(\ref{wstawka}) becomes  
\[M_{4}(s)\leq C(T,\phi_{0})M_{4}(0)+16C_{f}C(T,\phi_{0})\int_{0}^{t}\bar{d}(s)M_{4}(s)\,ds.\]
Since $\bar{d}(s)$ is integrable, Gronwall inequality implies
\[M_{4}(t)\leqslant M_{4}(0)C(T,\phi_{0})
\exp\left({16C_{f}C(T,\phi_{0})\int_{0}^{t}\bar{d}(s)}\,ds\right),
\quad 0\leqslant t \leqslant T.\]
Combining this inequality  with Corollary \ref{wniosek1} yields the claim.

We now show part (b).
By the integral equation and the definition of $p_{0}$, we notice:
\begin{eqnarray}\label{young}
p_{0}(\phi(\cdot,t))&=&||\phi(x,t)||_{L^{\infty}(\mathds{R})}\nonumber\\
&\leq&\Big{|}\Big{|}\exp\Big(\int_{0}^{t}(1-U)\bar{d}(s)]\,ds\Big)\phi_{0}(x)\Big{|}\Big{|}_{L^{\infty}(\mathds{R})}\nonumber\\
&&+\Big{|}\Big{|}\int_{0}^{t}\exp\Big(-\int_{s}^{t}[d(x-c\tau)-(1-U)\bar{d}(\tau)]d\tau\Big)U\bar{d}(s)f\star\phi(x,s)
\,ds\Big{|}\Big{|}_{L^{\infty}(\mathds{R})}\nonumber
\\ &\leqslant&
C(T,\phi_{0})\big{|}\big{|}\phi_{0}\big{|}\big{|}_{L^{\infty}(\mathds{R})}+\frac{C(T,\phi_{0})U}{\sqrt{2\pi}\sigma}\int_{0}^{T}\big{|}\bar{d}(s)\big{|}ds=:P_{0}<\infty.  
\end{eqnarray}
We used in these calculations the following bound
\[\big{|}\big{|}f\star\phi\big{|}\big{|}_{L^{\infty}(\mathds{R})}\leqslant||f||_{L^{\infty}(\mathds{R})}=\frac{1}{\sqrt{2\pi}\sigma}.\]
We can also see that
\begin{eqnarray*}
p_{2}(\phi(\cdot,t))&=&
||x^{2}\phi(x,t)||_{L^{\infty}(\mathds{R})}\\
&\leqslant&
C(T,\phi_{0})\Big{|}\Big{|}x^{2}\phi_{0}\Big{|}\Big{|}_{L^{\infty}(\mathds{R})}
+C(t,\phi_{0})U\Big{|}\Big{|}\int_{0}^{T}x^{2}\bar{d}(s)f\star\phi(x,s) \,ds\Big{|}\Big{|}_{L^{\infty}(\mathds{R})}\\
&\leqslant&
C(T,\phi_{0})
p_{2}(\phi_{0})+C(T,\phi_{0})U\int_{0}^{T}\big{|}\bar{d}(s)\big{|}\big{|}\big{|}x^{2}f\star\phi\big{|}\big{|}_{L^{\infty}(\mathds{R})}
\,ds.
\end{eqnarray*}
In order to estimate the expression above, one can look at
$\big{|}\big{|}x^{2}f\star\phi\big{|}\big{|}_{L^{\infty}(\mathds{R})}.$
Using an elementary inequality $x^{2}\leqslant 2y^{2}+2(x-y)^{2}$, we
get immediately 
\begin{eqnarray*}
\big{|}\big{|}x^{2}f\star\phi(\cdot,t)\big{|}\big{|}_{L^{\infty}(\mathds{R})}
&\leqslant &
2\int_{\mathds{R}}f(x-y)y^{2}\phi(y,t)\,dy+
2\int_{\mathds{R}}\phi(y,t)f(x-y)(x-y)^{2}\,dy\\
&\leqslant&
\frac{\sqrt 2}{\sqrt{\pi}\sigma}\int_{\mathds{R}}y^{2}\phi(y,t)\,dy
+2p_{2}(f)\int_{\mathds{R}}\phi(y,t)\,dy=\frac{\sqrt
  2}{\sqrt{\pi}\sigma}M_{2}(\phi(\cdot,t))+2p_{2}(f) .
\end{eqnarray*} 
Thus,
\begin{equation}\label{p2:p2}
\sup_{t\in[0,T)}p_{2}(\phi(\cdot,t))\leqslant C(T,\phi_{0})p_{2}(\phi_{0})+C(T,\phi_{0})U\int_{0}^{T}\bar{d}(s)\,ds\frac{\sqrt 2}{\sqrt {\pi}\sigma}\sup_{t\in[0,T)}M_{2}(\phi)+2p_{2}(f)=:P_{2}<\infty
\end{equation}
Now, we turn our attention to $p_{4}(\phi(\cdot,t))$.
\begin{eqnarray*}
p_{4}(\phi(\cdot,t))&=&
||x^{4}\phi(x,t)||_{L^{\infty}(\mathds{R})}\\
&\leqslant&
C(T,\phi_{0})
\Big{|}\Big{|}x^{4}\phi_{0}(x)\Big{|}\Big{|}_{L^{\infty}(\mathds{R})}
+C(T,\phi_{0})U\Big{|}\Big{|}
\int_{0}^{T}x^{4}\bar{d}(s)f\star\phi(x,s) \,ds\Big{|}\Big{|}_{L^{\infty}(\mathds{R})}\\
&\leqslant&
p_{4}(\phi_{0})+
\int_{0}^{T}\big{|}\bar{d}(s)\big{|}\big{|}\big{|}x^{4}f\star\phi(x,s)\big{|}\big{|}_{L^{\infty}(\mathds{R})} \,ds.
\end{eqnarray*}

In order to estimate this, we look at 
$\big{|}\big{|}x^{4}f\star\phi(x,s)\big{|}\big{|}_{L^{\infty}(\mathds{R})}$.
We notice that
for all real $x,y$ the following inequality holds:
\[x^{4}\leqslant 5y^4 + 5(x-y)^4+6y^2(x-y)^2.\]
Using this inequality, we continue our calculations,

\begin{eqnarray*}
\big{|}\big{|}x^{4}f\star\phi\big{|}\big{|}_{L^{\infty}(\mathds{R})}
&\leqslant&
 5\int_{\mathds{R}}f(x-y)y^{4}\phi(y,s)\,dy\\
&&+6\int_{\mathds{R}}\phi(y,s)f(x-y)y^{2}(x-y)^{2}\,dy
+5\int_{\mathds{R}}\phi(y,s)f(x-y)(x-y)^{4}\,dy\\
&\leqslant&
\frac{5}{\sqrt{2\pi}\sigma}M_{4}(\phi)+6M_{2}(\phi)p_{2}(f)+5p_{4}(f).
\end{eqnarray*}




Therefore,

\begin{equation}\label{p4:p4}
\sup_{t\in_[0,T)}p_{4}(\phi(\cdot,t))\leqslant 
C(T,\phi_{0}) p_{4}(\phi_{0})+
C(T,\phi_{0})UC_{4}\int_{0}^{T}\big{|}\bar{d}(s)\big{|}\,ds=:P_{4}.
\end{equation}
Now, we can take for $M =\max\{P_0, P_2, P_4\}$.
\end{proof}
\section{Existence of global solutions}
In this section we show our main result, the global in time solution
to (\ref{eq:rownanie}). It is based upon the estimates established in
Section 2. They use in an essential  way the additional regularity of
the initial 
condition, i.e.  the finite fourth moment. These bounds depend upon time,
but for each time constant $t$ they are finite. 
\begin{tw}
Let us assume that $\phi_{0}\in X$ and $M_{4}(\phi_{0})<\infty$, then the unique solution to (\ref{eq:rownanie}) exists for all $t\in[0,\infty)$, moreover $\phi\in C([0,\infty),X)$. 
\end{tw}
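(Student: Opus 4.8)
The plan is to run a continuation argument based on the local existence theory of \cite{rtw} together with the a priori bounds of Theorem \ref{tw-gl}. Since $\phi_0\in X$ with $M_4(\phi_0)<\infty$, the cited local result yields a unique solution on a maximal interval $[0,T^\ast)$, and the goal is to show $T^\ast=\infty$. Suppose, for contradiction, that $T^\ast<\infty$. By Theorem \ref{tw-gl}(b) the solution stays bounded, $\sup_{t\in[0,T^\ast)}\|\phi(\cdot,t)\|_X\le M(T^\ast)<\infty$, and by part (a) its first four moments are uniformly controlled. Boundedness by itself does not permit restarting the local theory; what is really needed is that $\phi(\cdot,t)$ converges in $X$ as $t\uparrow T^\ast$. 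Because $X$ is a Banach space, it suffices to prove that $\{\phi(\cdot,t)\}$ is Cauchy in $X$ as $t\uparrow T^\ast$.

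To establish the Cauchy property I would work directly with the constant variation formula (\ref{g:calkowe}) and estimate $\|\phi(\cdot,t_2)-\phi(\cdot,t_1)\|_X$ for $t_1<t_2$ close to $T^\ast$. Writing $E(x,t)=\exp(-\int_0^t[d(x-cs)-(1-U)\bar d(s)]\,ds)$ for the propagator, the difference $\phi(\cdot,t_2)-\phi(\cdot,t_1)$ splits into the initial-datum term $[E(x,t_2)-E(x,t_1)]\phi_0(x)$ and the difference of the Duhamel integrals. For each of the three weights $x^{2j}$, $j=0,1,2$, defining the norm of $X$, I would split the line into $\{|x|\le K\}$ and $\{|x|>K\}$. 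On the far region the factor $E(x,t_1)$ carries the Gaussian-type decay $\exp(-\int_0^{t_1}(x-cs)^2\,ds)$, which for $t_1$ bounded below (say $t_1\ge T^\ast/2$) dominates $x^{2j}$ and makes the contribution uniformly small once $K$ is large, independently of $t_1,t_2$. On the bounded region $d(x-cs)$ is bounded, so the exponent difference is controlled by $C_K|t_2-t_1|+(1-U)\int_{t_1}^{t_2}\bar d(s)\,ds$, and both tend to $0$ as $t_1,t_2\uparrow T^\ast$: the first trivially, the second by absolute continuity of the integral, since Proposition 1 gives $\bar d\in L^1(0,T^\ast)$. The Duhamel term is treated the same way, its overhead piece $\int_{t_1}^{t_2}(\cdots)\,ds$ being bounded by a constant times $\int_{t_1}^{t_2}\bar d(s)\,ds\to0$. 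This yields a limit $\phi^\ast:=\lim_{t\uparrow T^\ast}\phi(\cdot,t)\in X$.

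It remains to verify that $\phi^\ast$ is an admissible datum and to close the contradiction. Nonnegativity of $\phi^\ast$ is inherited from $\phi(\cdot,t)\ge0$; the normalization $\int_{\mathds R}\phi^\ast\,dx=1$ follows from the uniform moment bound of Theorem \ref{tw-gl}(a), which supplies an integrable dominating tail and lets one pass to the limit in $\int\phi(\cdot,t)\,dx=1$ by dominated convergence; and $M_4(\phi^\ast)<\infty$ again by the uniform fourth-moment bound. Applying the local existence theorem of \cite{rtw} with datum $\phi^\ast$ at time $T^\ast$ produces a solution on $[T^\ast,T^\ast+\delta)$ for some $\delta>0$, which by uniqueness glues with the original solution to one on $[0,T^\ast+\delta)$, contradicting maximality of $T^\ast$. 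Hence $T^\ast=\infty$ and the solution is global. Finally, $\phi\in C([0,\infty),X)$ follows because the local theory already gives $\phi\in C^1([0,T),X)\subset C([0,T),X)$ on every finite interval, while the Cauchy estimate above furnishes continuity up to each endpoint.

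I expect the main obstacle to be this Cauchy estimate in the weighted space $X$: the large-$|x|$ behaviour of the weighted differences must be controlled uniformly in $t_1,t_2$, so the interplay between the polynomial weights $x^{2j}$ and the Gaussian decay of the propagator has to be made quantitative, while the small-$|x|$ contribution relies crucially on the merely $L^1$ (not pointwise) control of $\bar d$ through absolute continuity of its integral.
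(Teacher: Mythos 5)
Your overall strategy coincides with the paper's: prove that $\phi(\cdot,t)$ is Cauchy in $X$ as $t\uparrow T^{\ast}$ via the constant--variation formula (\ref{g:calkowe}), split $\mathds{R}$ into $\{|x|\le K\}$ and $\{|x|>K\}$, feed in Proposition 1 ($\bar d\in L^{1}$) and the bounds $P_{j}$ from Theorem \ref{tw-gl}, and then restart the local theory of \cite{rtw}. Your treatment of the initial-datum term and of the overhead piece $\int_{t_{1}}^{t_{2}}(\cdots)\,ds$ is correct, and the framing (maximal interval and contradiction vs.\ direct extension, explicit verification that the limit is a probability density) differs from the paper only cosmetically.

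The gap is the sentence ``the Duhamel term is treated the same way.'' It cannot be treated the same way. Writing the difference of the two Duhamel integrals over $[0,t_{1}]$ as
\[
\int_{0}^{t_{1}} e^{-g(x,s,t_{1})}\bigl(1-e^{-g(x,t_{1},t_{2})}\bigr)\,U\bar d(s)\,x^{j} f\star\phi(x,s)\,ds,
\qquad
g(x,s,t)=\int_{s}^{t}[d(x-c\tau)-(1-U)\bar d(\tau)]\,d\tau,
\]
the Gaussian decay available in the far region $|x|>K$ is $e^{-\int_{s}^{t_{1}}d(x-c\tau)\,d\tau}\le e^{-\frac12 x^{2}(t_{1}-s)}$ (for $|x|$ large relative to $cT^{\ast}$), and it \emph{degenerates as $s\uparrow t_{1}$}; there is no analogue of your lower bound ``$t_{1}\ge T^{\ast}/2$'' for $t_{1}-s$, since $s$ ranges over all of $[0,t_{1}]$. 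Meanwhile the factor $1-e^{-g(x,t_{1},t_{2})}$ is \emph{not} small for $|x|>K$, because there the exponent $\int_{t_{1}}^{t_{2}}d(x-c\tau)\,d\tau\approx x^{2}(t_{2}-t_{1})$ is large, so $1-e^{-g}\approx 1$. Hence neither of your two mechanisms (far-region propagator decay, near-region smallness of the exponent difference) covers the set $\{|x|>K\}\times\{s\in(t_{1}-\delta,t_{1}]\}$. The paper closes exactly this hole by an additional splitting of the \emph{time} interval at $t_{1}-\delta$: on $[t_{1}-\delta,t_{1}]$ the whole integrand is bounded by $C\,P_{j}\,\bar d(s)$ and the contribution is at most $C\,P_{j}\int_{t_{1}-\delta}^{t_{1}}\bar d(s)\,ds$, small by absolute continuity of the integral of $\bar d$; on $[0,t_{1}-\delta]$ one has $t_{1}-s\ge\delta$, so for $|x|>K$ the propagator is bounded by $e^{-\frac12 K^{2}\delta}$, which is made small by enlarging $K$ \emph{after} $\delta$ has been fixed. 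You already possess both ingredients (absolute continuity of $\int\bar d$ and far-region decay); what is missing is the idea of splitting in time as well as in space, with the parameters chosen in the order $\epsilon\to\delta\to K\to n,m$. With that insertion your argument is complete and agrees with the paper's proof.
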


\begin{proof}
We have already constructed a unique solution $\phi\in C([0,T),X)$. If
  $T<\infty$, then we will 
  show, that $\lim_{t\to T}\phi(t)$ exists in $X$. Hence, due to Theorem
  \ref{tw-gl} $\phi(T)$ automatically has a finite fourth moment,
  $M_{4}\phi(T)$. Subsequently, $\phi$ may be extended to
  $[T,T+\epsilon)$ for a positive $\epsilon$. Hence our claim will follow. 

Let us suppose that $\{t_{m}\}$ is any sequence converging to $T$. We shall
see, that  
$\{\phi(t_n)\}$ is a Cauchy sequence in the Banach space $X$. In
order to achieve this  goal we will use the constant variation formula.
For a given $\epsilon>0$ we shall estimate the norm of the difference
$\phi_{n}-\phi_{m}$, 
\begin{equation}
\big{|}\big{|}\phi_{n}-\phi_{m}\big{|}\big{|}_{X}=\big{|}\big{|}\phi(x,t_{n})-\phi(x,t_{m})\big{|}\big{|}_{L^{\infty}(\mathds{R})}+\big{|}\big{|}x^{2}\big(\phi(x,t_{n})-\phi(x,t_{m})\big)\big{|}\big{|}_{L^{\infty}(\mathds{R})}+\\
+\big{|}\big{|}x^{4}\big(\phi(x,t_{n})-\phi(x,t_{m})\big)\big{|}\big{|}_{L^{\infty}(\mathds{R})}.
\nonumber
\end{equation}
In order to simplify our subsequent calculations, we introduce the
following  notation 
\begin{equation}\label{gje}
g(x,s,t_{n})=\int_{s}^{t_{n}}[d(x-c\tau)-(1-U)\bar{d}(\tau)]d\tau.
\end{equation}

The integral equation (\ref{g:calkowe}) implies that for
$j\in\{0,2,4\}$, we have 
\[|x^{j}(\phi_{n}-\phi_{m})(x,\cdot)|\leqslant I_{j1}(x)+I_{j2}(x),\]
where
\[I_{j1}(x)=
\bigg{|}e^{-g(x,0,t_n)}\big(1-e^{-g(x,t_n,t_m)}\big)x^{j}\phi_{0}(x)\bigg{|},\]
\[I_{j2}(x)=
\bigg{|}\int_{0}^{t_{n}}e^{-g(x,s,t_n)}\big(1-e^{-g(x,t_n,t_m)}\big)U\bar{d}(s)x^{j}f\star\phi(x,s)+ \int_{t_{n}}^{t_{m}}e^{-g(x,s,t_m)}U\bar{d}(s)x^{j}f\star\phi(s,x)\bigg{|} 
\,ds.\]
In the following calculations, without loss of generality, we may
assume that $t_{n}<t_{m}$.
First, we shall take care of $I_{j1}(x)$, $j\in\{0,2,4\}$. 
Let us notice first that $\sup_{x\in\mathds{R}}
I_{j1}(x)=\max\{\sup_{|x|\le R}I_{j1}(x),\sup_{|x|>R}I_{j1}(x)\}.$ 
Subsequently, positivity
of $d$, (\ref{gje}) and (\ref{trzyipol}) imply that 
$$
\sup_{|x|\le R}I_{j1}(x) \le C(\phi_0,T)\sup_{|x|\le R}
\big|
\big(1-e^{-\int_{t_{n}}^{t_{m}}d(x-cs)\,ds}\big)\phi_{0}x^j
\big|.
$$
If $R^2\ge 1+2c^2T^2$, then we have
\[d(x-ct)\leqslant 1+2R^2+2c^2T^2<3R^2.\]
Once we fixed $R$ we can see that
\[\sup_{|x|\le R}I_{j1}(x)\leqslant
C(\phi_0,T)(1-e^{-3R^2(t_{m}-t_{n})})||\phi_{0}(x)||_{X}\leqslant
\epsilon\;\;\;\mbox{when $t_{m}-t_{n}$ is small enough.}\]

Now, we turn our attention to the estimate of $\sup_{|x|>R}I_{j1}(x)$.
We may assume that $R$ is so large, that for our $\epsilon$ the
following bound holds
\[C(\phi_0,T)e^{-\int_{0}^{t_{m}}d(x-cs)\,ds}\leqslant
\frac{\epsilon}{||\phi_{0}(x)||_{X}}\;\;\mbox{for}\;\;|x|>R.\] 
Therefore, again due to positivity of $d$, we have
\[\sup_{|x|>R}I_{j1}(x)\leqslant C(\phi_0,T)
\big|\frac{\epsilon}{||\phi_{0}(x)||_{X}}\big(1-e^{-\int_{t_{n}}^{t_{m}}d(x-cs)\,ds}\big)\phi_{0}x^j\big|
\le \epsilon
.\]

The estimates of $I_{j2}(x)$ are performed in similar fashion, but they
have to be slightly more subtle. The problem is that $s\in[0,t_{n}]$
appearing under the integral sign, need not be close to $T$.

Thus, $I_{j2}(x)$, $j=0,2,4$ take the following form
\[I_{j2}(x)\le\bigg{|}\int_{0}^{t_{n}}
\big(
e^{-g(x,s,t_{n})} -
  e^{-g(x,s,t_{m})}\big)U\bar{d}(s)x^{j}f\star\phi(x,s) \,ds
\bigg{|}
+\bigg{|}
\int_{t_{n}}^{t_{m}}e^{-g(x,s,t_m)}U\bar{d}(s)x^{j}f\star\phi(x,s) \,ds\bigg{|}.\]
The second term is easy to estimate. For any $j\in\{0,2,4\}$
integrability of $\bar{d}$ and (\ref{gje}) imply that 
\[\bigg{|}\int_{t_{n}}^{t_{m}}e^{-g(x,s,t_m)}U\bar{d}(s)x^{j}f\star\phi(x,s)
\,ds\bigg{|}\leqslant UP_{j}\int_{t_{n}}^{t_{m}}e^{||d||_{L^{1}}}\to
0,\;\;\mbox{as}\;\;t_n,t_m\to T,\]
where $P_j$,  $j=0,2,4$, were defined in (\ref{young}), (\ref{p2:p2}),
(\ref{p4:p4}). 

In order to bound the first integral, we split the interval $[0,t_{n}]$ into two
$[0,t_{n}-\delta]$ and $[t_{n}-\delta,t_m]$, where $\delta>0$ shall be
selected later. 

We notice that (\ref{trzyipol}),  Theorem \ref{tw-gl} and positivity
of $d$ yield the 
following estimate, 
\[\Big|\int_{t_{n}-\delta}^{t_{n}}e^{-g(x,s,t_{n})}
(1-e^{-(g(x,t_{n},t_m))})U\bar{d}(s)x^{j}f\star\phi(x,s)
\,ds\Big|
\leqslant C(\phi_0,T)U
\Big|\int_{t_{n}-\delta}^{t_{n}}\bar{d}(s)x^{j}f\star\phi(x,s)
\,ds\Big|\leqslant\] 
\[\leqslant CP_{j}\int_{t_{n}-\delta}^{t_{n}}\bar{d}(s)\,ds<\epsilon.\]
The last inequality is a result of integrability of $\bar{d}$ implying
existence of sufficiently small $\delta$ with the desired property.

Now, we have to estimate the supremum over the real number of the
integral over $[0,t_{n}-\delta]$. First we notice that

$$\Big|\int_0^{t_{n}-\delta}e^{-g(x,s,t_{n})}
(1-e^{-(g(x,t_{n},t_m))})U\bar{d}(s)x^{j}f\star\phi(x,s)
\,ds \Big|\leq$$
$$ P_j C(\phi_0,T)\sup_{x\in\mathds{R}}
\Big(
\int_0^{t_{n}-\delta} e^{-\int_s^{t_n} d(x-c\tau)\,d\tau}(1- e^{-\int_{t_n}^{t_m} d(x-c\tau)\,d\tau}) 
\bar{d}(s)\,ds$$
$$+\int_0^{t_{n}-\delta} e^{-\int_{t_n}^{t_m}  d(x-c\tau)\,d\tau}
(1- e^{\int_{t_n}^{t_m}\bar{d}(\tau )\,d\tau})\bar{d}(s)\,ds
\Big)\leq$$
$$P_j C(\phi_0,T)\left(\sup_{x\in\mathds{R}}
\int_0^{t_{n}-\delta}  e^{-\int_s^{t_n} d(x-c\tau)\,d\tau}
(1- e^{-\int_{t_n}^{t_m} d(x-c\tau)\,d\tau}) 
\bar{d}(s)\,ds
+\int_0^{t_{n}-\delta}
(1- e^{\int_{t_n}^{t_m}\bar{d}(\tau )\,d\tau})\bar{d}(s)\,ds
\right).$$

We have already noticed that the second integral can be made smaller
than $\epsilon$ for large $n$ and $m.$

In order to estimate the first integral on the RHS above we use again that for
any positive function $u$, $\sup_{x\in\mathds{R}} u =\max\{
\sup_{|x| >{R}} u,\sup_{|x| \le R} u\}.$ For a given $\epsilon>0$ and
$\delta$ fixed above we may possibly take larger $R>0$ so that,
\[
3R^2 \ge d(x-c\tau)\geqslant\frac{1}{2}R^{2}
\;\;\mbox{and}\;\;
e^{-\frac{1}{2}R^2\delta}<\frac{\epsilon}{P_{j}||\bar{d}||_{L^1}}.\]
With this choice of $R$ for $|x| \le R$ we see that for sufficiently
large $m, n$ we have,
$$
1-e^{-\int_{t_{n}}^{t_{m}}d(x-c\tau)d\tau} \le
1-e^{-\int_{t_{n}}^{t_{m}}3R^2\,ds} <\epsilon.
$$
We conclude that
$$
\sup_{|x| \le R}
\int_0^{t_{n}-\delta} e^{-\int_s^{t_n} d(x-c\tau)\,d\tau}
(1- e^{-\int_{t_n}^{t_m} d(x-c\tau)\,d\tau}) \bar{d}(s)
\,ds
$$
can be made as small as we wish. 
Moreover, 
we have,
\[\sup_{|x| > R}
\int_0^{t_{n}-\delta} e^{-\int_s^{t_n} d(x-c\tau)\,d\tau}(1- e^{-\int_{t_n}^{t_m} d(x-c\tau)\,d\tau})\bar{d}(s)
\,ds
\leqslant C\int_{0}^{t_{n}-\delta}e^{-\frac{1}{2}R^2\delta}\bar{d}(s)
\,ds<\epsilon.
\]
We then conclude that
\[\sup_{x\in\mathds{R}}I_{j2}(x) \leqslant3\epsilon.\]
Subsequently, $\{\phi(t_n)\}$ is a Cauchy sequence, hence the limit
$\lim_{t\to T}\phi(t)$ exists in $X$ and we may invoke the existence
result in \cite{rtw} to continue the solution onto $[T,
  T+\epsilon)$. Our claim follows.
\end{proof}




\end{document}